\theoremstyle{thmstyleone}%
\newtheorem{theorem}{Theorem}
\newtheorem{lemma}[theorem]{Lemma}
\newtheorem{proposition}[theorem]{Proposition}%
\theoremstyle{thmstyletwo}%
\newtheorem{remark}{Remark}%
\theoremstyle{thmstylethree}%
\newcommand{\N}{\mathbb{N}}
\newcommand{\Z}{\mathbb{Z}}
\newcommand{\R}{\mathbb{R}}
\newcommand{\C}{\mathbb{C}}
\newcommand{\ve}{\varepsilon}
\newcommand{\on}{\operatorname}
\newcommand{\of}[1]{\left(#1\right)}
\newcommand{\set}[1]{\left\{#1\right\}}
\newcommand{\BEu}[1]{\underset{#1}{\mathlarger{\mathlarger{\mathbb{E}}}^{~}}\,}
\newcommand{\BEul}[1]{\underset{#1}{\mathlarger{\mathlarger{\mathbb{E}}}^\log}\,}
\newcommand{\BEulk}[1]{\underset{#1}{\mathlarger{\mathlarger{\mathbb{E}}}^{\log^{k-1}}}\,}
\begin{document}

\title[A logarithmic analogue of Alladi's formula]{A logarithmic analogue of Alladi's formula}


\author{\fnm{Biao} \sur{Wang}}\email{bwang@ynu.edu.cn}

\affil{\orgdiv{School of Mathematics and Statistics}, \orgname{Yunnan University}, \orgaddress{\city{Kunming}, \postcode{650500}, \state{Yunnan}, \country{China}}}


\abstract{Let $\mu(n)$ be the M\"{o}bius function. Let $P^-(n)$ denote the smallest prime factor of an integer $n$. In 1977, Alladi established the following formula related to the prime number theorem for arithmetic progressions
\[
	-\sum_{\substack{n\geq 2\\ P^-(n)\equiv \ell ({\rm mod}k)}}\frac{\mu(n)}{n}=\frac1{\varphi(k)}
\]
for positive integers $\ell, k\ge1$ with $(\ell,k)=1$, where $\varphi$ is Euler's totient function. In this note, we will show a logarithmic analogue of Alladi's formula in an elementary proof.}

\keywords{prime number theorem, M\"obius function, prime factors}


\pacs[MSC Classification]{11N13, 11N37}

\maketitle

\section{Introduction}

Let $\mu(n)$ be the M\"{o}bius function defined by 
\[
\mu(n)=\begin{cases}
	1 & \text{if } n=1,\\
	0 & \text{if } p^2\mid n \text{ for some prime } p,\\
	(-1)^r & \text{if } n=p_1\cdots p_r \text{ where the } p_i \text{ are distinct primes}.
\end{cases}
\]
Then it is well-known (e.g., \cite{Mangoldt1897}) that the prime number theorem (PNT) is equivalent to the assertion that 
\begin{equation}\label{PNT}
	\sum_{n=1}^\infty\frac{\mu(n)}{n}=0.
\end{equation}
By reordering \eqref{PNT}, it is equivalent to asserting that $-\sum_{n=2}^\infty\frac{\mu(n)}{n}=1$, which can be interpreted as the statement that the probability of integers $n\ge2$ being divisible by a prime is $1$.  Let $P^-(n)$ and $P^+(n)$ be the smallest prime factor of $n$ and the largest prime factor of $n$ for $n\ge2$, respectively. Let $P^-(1)=P^+(1)=1$. In 1977, Alladi  \cite{Alladi1977}  introduced a duality between $P^-(n)$ and  $P^+(n)$, showing that for any function $f$ defined on integers  with $f(1)=0$, we have
\begin{align}
	\sum_{d|n}\mu(d)f(P^-(d))&=-f(P^+(n)),\label{duality1}\\
	\sum_{d|n}\mu(d)f(P^+(d))&=-f(P^-(n)).\label{duality2}
\end{align}
Let $k,\ell \geq 1$ be integers with $(\ell,k)=1$. Then applying the PNT for arithmetic progressions with \eqref{duality1}, Alladi \cite{Alladi1977} established that
\begin{equation}\label{alladi}
	-\sum_{\substack{n\geq 2\\ P^-(n)\equiv \ell (\on{mod}k)}}\frac{\mu(n)}{n}=\frac1{\varphi(k)},
\end{equation}
where $\varphi$ is Euler's totient function. 

In the last few years, Alladi's formula \eqref{alladi} has been greatly generalized in various settings in number theory, see \cite{Dawsey2017, OnoSchneiderWagner2017, SweetingWoo2019, KuralMcDonaldSah2020, Wang2021jnt, OnoSchneiderWagner2021, DuanWangYi2021, DuanMaYi2022}. Recently, Alladi and Johnson \cite{AlladiJohnson2024} proved that
\begin{equation}\label{AlladiJohnson2024}
	\sum_{\substack{n\geq 2\\ P^-(n)\equiv \ell (\on{mod}k)}}{\mu(n)\omega(n)\over n}=0,
\end{equation}
where $\omega(n)$ is the number of distinct prime factors of $n$. Later, \eqref{AlladiJohnson2024} is generalized by Tenenbaum \cite{Tenenbaum2024} and Sengupta \cite{Sengupta2025}. In 1899, Landau \cite{Landau1899}\footnote{See \cite{Coons2010} for an English translation by Michael J. Coons.} proved in his doctoral dissertation that  the PNT is also equivalent to the following logarithmic analogue of \eqref{PNT}:
\begin{equation}\label{log_PNT}
	\lim_{x\to\infty} \frac{1}{\log x} \sum_{n\leq x}\frac{\mu(n)\log n}{n}=0.
\end{equation}
Inspired by \eqref{AlladiJohnson2024} and \eqref{log_PNT}, we will show a logarithmic analogue of  Alladi's formula in this note.

Let $S$ be a set of primes. We say $S$ has a \textit{natural density} if the following limit  $$\lim_{x\to\infty} \frac{\#\set{p\in S: p\leq x}}{\#\set{p \text{ prime}: p\leq x}}$$ exists. The following theorem is our main result.

\begin{theorem}\label{mainthm_logAlladi}
		If $S$ is a set of primes of natural density, then we have
	\begin{equation}\label{logAlladi_min}
			\lim_{x\to \infty}\frac{1}{\log x}\sum_{\substack{n\leq x\\P^-(n)\in S}} \frac{\mu(n)\log n }{n}=0.
	\end{equation}	
Moreover,
	\begin{equation}\label{logAlladi_max}
			\lim_{x\to \infty}\frac{1}{\log x}\sum_{\substack{n\leq x\\P^+(n)\in S}} \frac{\mu(n)\log n }{n}=0.
	\end{equation}

\end{theorem}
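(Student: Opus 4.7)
\smallskip
\noindent\textbf{Proof proposal.} The plan is to apply Abel's summation formula to reduce \eqref{logAlladi_min} to the ordinary (non-logarithmic) Alladi sum, whose convergence is governed by the natural-density extension of \eqref{alladi}. Let $\delta$ denote the natural density of $S$ and set
\[
A^-_S(x) := \sum_{\substack{2\le n\le x \\ P^-(n)\in S}} \frac{\mu(n)}{n},\qquad A^+_S(x) := \sum_{\substack{2\le n\le x \\ P^+(n)\in S}} \frac{\mu(n)}{n}.
\]
The key analytic input is the density generalization of Alladi's formula, $A^-_S(x)\to -\delta$ as $x\to\infty$, which follows from results cited in the introduction (e.g., \cite{KuralMcDonaldSah2020}); the analogous statement $A^+_S(x)\to -\delta$ can be obtained either from \eqref{duality1} applied with $f=\mathbf{1}_S$, or directly by splitting on the largest prime $P^+(n)=p$ and invoking Mertens' theorems together with the density hypothesis.

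For \eqref{logAlladi_min}, set $a_n=\mu(n)/n$ when $n\ge 2$ and $P^-(n)\in S$, and $a_n=0$ otherwise, so that $A^-_S(x)=\sum_{n\le x}a_n$. Abel's summation formula yields
\[
\sum_{\substack{n\le x \\ P^-(n)\in S}}\frac{\mu(n)\log n}{n} \;=\; A^-_S(x)\log x \;-\; \int_2^x\frac{A^-_S(t)}{t}\,dt.
\]
Writing $A^-_S(t)=-\delta+\varepsilon(t)$ with $\varepsilon(t)\to 0$, the boundary term equals $-\delta\log x + o(\log x)$. For the integral, given $\eta>0$ choose $T$ so that $|\varepsilon(t)|<\eta$ for $t\ge T$; then
\[
\int_2^x \frac{A^-_S(t)}{t}\,dt \;=\; -\delta\log x + O_T(1) + O(\eta\log x),
\]
which is likewise $-\delta\log x + o(\log x)$. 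Subtracting and dividing by $\log x$ delivers \eqref{logAlladi_min}. The proof of \eqref{logAlladi_max} is identical with $A^-_S$ replaced by $A^+_S$.

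The main obstacle is securing the density-$\delta$ asymptotics $A^\pm_S(x)\to -\delta$; once these are in place, the remaining partial-summation step is routine. If a self-contained treatment avoiding reliance on \cite{KuralMcDonaldSah2020} is desired, one may instead decompose the target sum by writing $n=pm$ with $p=P^\pm(n)$, apply Mertens' theorems to the sums over $p$, and invoke Landau's equivalent form \eqref{log_PNT} of the PNT on the resulting inner M\"obius-log sums; this route bypasses the density Alladi formula at the cost of a longer calculation.
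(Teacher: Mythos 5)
Your argument is correct, but it takes a genuinely different route from the paper. You apply partial summation directly to the target sum, writing it as $A^-_S(x)\log x-\int_2^x A^-_S(t)\,t^{-1}dt$, and then feed in the convergence of the classical restricted Alladi sum $A^-_S(x)\to-\delta(S)$ (the main theorem of \cite{KuralMcDonaldSah2020}, specialized to $\Q$); the $\ve$--$T$ treatment of the integral is exactly right, and the same device reappears in the paper's Section~4 around \eqref{referee_comment}. The paper instead proves a logarithmic duality theorem (Theorem~\ref{mainthm_log}) by a hyperbola-method computation with $h(x)=\sum_{m\le x}1/m$ and the duality identity \eqref{duality1}, reducing \eqref{logAlladi_min} to the statement that the Ces\`aro and logarithmic averages of $1_{S}(P^+(n))$ agree; its external inputs are \cite[Theorem 3.1]{KuralMcDonaldSah2020} together with Alladi's estimate \eqref{eqn_alladi3.8}, which jointly imply the convergence $A^-_S(x)\to-\delta(S)$ that you invoke wholesale. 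So the two proofs rest on essentially equivalent inputs; yours is shorter and more direct, while the paper's yields the reusable equivalence of Theorem~\ref{mainthm_log} (and its higher-power generalization) as a byproduct.

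Two small corrections. First, for \eqref{logAlladi_max} the relevant limit is \emph{not} $A^+_S(x)\to-\delta(S)$: one has
\begin{equation*}
-\lim_{x\to\infty}A^+_S(x)\;=\;\sum_{p\in S}\frac1p\prod_{q<p}\Bigl(1-\frac1q\Bigr),
\end{equation*}
the natural density of $\set{n\ge2: P^-(n)\in S}$ (try $S=\set{2}$: the left side is $1/2$ while $\delta(S)=0$). Your proof survives because it only uses that $A^+_S(x)$ converges to \emph{some} constant, but the stated value is wrong. Second, the convergence of $A^+_S(x)$ follows from the duality identity \eqref{duality2} together with \cite[(9.15)--(9.16)]{Alladi1982} (as the paper does), not from \eqref{duality1}, which governs the $P^-$ case.
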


Taking $S$ to be the set of all primes, then we recover \eqref{log_PNT} from \eqref{logAlladi_min} or \eqref{logAlladi_max}. Hence Theorem~\ref{mainthm_logAlladi} is a refinement of the equivalent form \eqref{log_PNT} of the PNT. If we take $S=\set{p \text{ prime}: p\equiv \ell (\on{mod}k)}$ for $(\ell,k)=1$, then we get the following analogue of \eqref{AlladiJohnson2024}:
\begin{equation}
	\lim_{x\to \infty}\frac{1}{\log x}\sum_{\substack{n\leq x\\P^-(n)\equiv \ell (\on{mod}k)}} \frac{\mu(n)\log n }{n}=0.
\end{equation}

To prove Theorem~\ref{mainthm_logAlladi}, we will mainly show a logarithmic analogue of a duality theorem obtained by Alladi \cite{Alladi1977} in the next section.

\section{Duality between $P^+(n)$ and $P^-(n)$}

Based on the duality identity \eqref{duality1}, Alladi established the following duality relationship between the smallest and largest prime factors of integers.

\begin{theorem}[{\cite[Theorem 6]{Alladi1977}}] \label{mainthmmuf}
	Let $f:\N\to \C$ be a bounded function. Let $\delta$ be a real number.  Then
	\[
		\lim_{x\to\infty}\frac1x\sum_{n\leq x}f(P^+(n))= \delta
	\]
	if and only if
	\[
		-\lim_{x\to\infty} \sum_{2\leq n\leq x} \frac{\mu(n)f(P^-(n))}{n}=\delta.
	\]
\end{theorem}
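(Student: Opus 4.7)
The plan is to use the duality identity \eqref{duality1} to reduce the equivalence to a single error-term estimate, and then to control that error by a Dirichlet-series / Tauberian argument (with an elementary alternative available).

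Without loss of generality we may assume $f(1)=0$, since altering $f(1)$ affects neither limit: the M\"obius sum begins at $n=2$, and $f(P^+(1))/x = f(1)/x\to 0$. Summing \eqref{duality1} over $n\le x$ and interchanging the order of summation gives
\[
	-\sum_{n\le x}f(P^+(n)) \;=\; \sum_{d\le x}\mu(d)f(P^-(d))\,\lfloor x/d\rfloor.
\]
Substituting $\lfloor x/d\rfloor = x/d - \{x/d\}$ and dividing by $x$ yields
\[
	\frac{1}{x}\sum_{n\le x}f(P^+(n)) \;+\; \sum_{2\le d\le x}\frac{\mu(d)f(P^-(d))}{d} \;=\; \frac{R(x)}{x},\qquad R(x):=\sum_{d\le x}\mu(d)f(P^-(d))\{x/d\}.
\]
The theorem is therefore reduced to proving that $R(x)=o(x)$ whenever either of the two displayed hypotheses holds.

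The main obstacle is that the trivial bound $|R(x)|\le\|f\|_\infty\cdot x$ is useless, so the hypothesis must be exploited. My preferred route is through Dirichlet series: for $\operatorname{Re}(s)>1$, set
\[
	F(s) \;=\; \sum_{n\ge 1}\frac{f(P^+(n))}{n^s},\qquad G(s) \;=\; \sum_{n\ge 1}\frac{\mu(n)f(P^-(n))}{n^s}.
\]
The identity \eqref{duality1} (together with $f(1)=0$) translates into the clean factorization $F(s) = -\zeta(s)\,G(s)$. Each of the two hypotheses determines the asymptotic of the corresponding Dirichlet series as $s\to 1^+$ via Abel summation (or Abel's continuity theorem for Dirichlet series), and the factorization transfers this information to the other series. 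A Tauberian theorem of Wiener--Ikehara type (for the mean-value conclusion $\sum_{n\le x}f(P^+(n))\sim\delta x$) together with an Axer-type result (for the convergence of the M\"obius series $\sum\mu(n)f(P^-(n))/n$) then converts the Dirichlet-series behavior back to the partial-sum statement, which is exactly $R(x)=o(x)$.

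An elementary alternative estimates $R(x)$ directly via a Dirichlet hyperbola split at $\sqrt{x}$: the contribution from $d\le\sqrt{x}$ is trivially $O(\sqrt{x})=o(x)$, while for $d>\sqrt{x}$ one groups terms by the level $m=\lfloor x/d\rfloor\le\sqrt{x}$ and applies Abel summation against the assumed convergence of $\sum\mu(d)f(P^-(d))/d$. Either route reduces the whole theorem to the single structural identity \eqref{duality1} combined with standard analytic-number-theoretic tools; the technical heart is in all cases the control of the fractional-part error term $R(x)$.
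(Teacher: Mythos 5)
Your reduction is sound as far as it goes: summing \eqref{duality1} over $n\le x$ and writing $\lfloor x/d\rfloor = x/d - \{x/d\}$ correctly reduces the whole theorem to the single estimate $R(x)=\sum_{d\le x}\mu(d)f(P^-(d))\{x/d\}=o(x)$, which is exactly the content of the estimate \eqref{eqn_alladi3.8} that the paper quotes from Alladi (the paper does not reprove Theorem~\ref{mainthmmuf}; it cites it and records \eqref{eqn_alladi3.8} as the engine behind it). The gap is that you never actually prove $R(x)=o(x)$, and this is the entire difficulty of the theorem. Crucially, Alladi establishes it \emph{unconditionally} for every bounded $f$ with $f(1)=0$ --- neither of the two displayed hypotheses is assumed --- and the essential input is the prime number theorem, through uniform estimates of the shape $\sum_{n\le x,\, P^-(n)>y}\mu(n)=o(x)$ uniformly in $y$, which yield $\sum_{d\le y}\mu(d)f(P^-(d))=o(y)$; an Axer-type argument then converts this into $R(x)=o(x)$. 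Your two sketches do not supply this.

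Concretely: (i) in the Dirichlet-series route the factorization $F(s)=-\zeta(s)G(s)$ is correct, but Wiener--Ikehara requires $F(s)-\delta/(s-1)$ to extend continuously to the line $\operatorname{Re}(s)=1$, which is not available for an arbitrary bounded $f$; and in the reverse direction the hypothesis only gives the Abel-type limit $G(s)\to-\delta$ as $s\to1^+$, and passing from that to convergence of $\sum\mu(n)f(P^-(n))/n$ is an unjustified Tauberian step. (ii) In the elementary route the range $d\le\sqrt{x}$ is indeed $O(\sqrt{x})$, but for $d>\sqrt{x}$ your plan ``applies Abel summation against the assumed convergence of $\sum\mu(d)f(P^-(d))/d$'': in the direction where the hypothesis is $\frac1x\sum_{n\le x}f(P^+(n))\to\delta$, that convergence is the \emph{conclusion} and cannot be assumed; and even when it is available, grouping by $m=\lfloor x/d\rfloor$ produces a term $\sum_{m\le\sqrt{x}}m\bigl(U(x/m)-U(x/(m+1))\bigr)$ with $U(y)=\sum_{d\le y}\mu(d)f(P^-(d))$, which under the bound $U(y)=o(y)$ only gives $o(x\log x)$ by direct summation; sharpening this to $o(x)$ is precisely Axer's theorem, and the needed input $U(y)=o(y)$ must in any case be proved unconditionally from the PNT. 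So the proposal reproduces the correct skeleton but omits the proof of its load-bearing estimate.
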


Indeed, to obtain Theorem~\ref{mainthmmuf}, Alladi showed that for any function $f$ defined on integers  with $f(1)=0$, 
\begin{equation}\label{eqn_alladi3.8}
\sum_{n\leq x} f(P^+(n))= -x \sum_{n\leq x} \frac{\mu(n)f(P^-(n))}{n}+o(x).
\end{equation}

If $B$ is a finite nonempty set, for any function $a:B\to\C$ on $B$, we define the \textit{Ces\`{a}ro average} of $a$ over $B$ and the \textit{logarithmic average} of $a$ over $B$ respectively by
$$
\BEu{n\in B} a(n) \,\colonequals\, \frac{1}{|B|}\sum_{n\in B} a(n)\qquad\text{and }\qquad
\BEul{n\in B} a(n) \,\colonequals\, \frac{\sum_{n\in B} {a(n)}/{n}}{\sum_{n\in B}{1}/{n}}.
$$
In the following theorem, we establish a logarithmic analogue of  Theorem~\ref{mainthmmuf}.

\begin{theorem}\label{mainthm_log}
Let $f:\N\to \C$ be a bounded function with $f(1)=0$. Then
\begin{equation}
	\lim_{x\to \infty} \Big(\BEul{1\leq n\leq x} f(P^+(n))- \BEu{1\leq n\leq x} f(P^+(n))\Big)=0
\end{equation}
if and only if
\begin{equation}\label{log_alldi}
	\lim_{x\to \infty}\frac{1}{\log x}\sum_{n\leq x} \frac{\mu(n)\log n }{n}f(P^-(n))=0.
\end{equation}
\end{theorem}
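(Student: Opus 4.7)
The plan is to mimic Alladi's derivation of \eqref{eqn_alladi3.8} but with a logarithmic weight, thereby producing an identity of the shape
\[
\BEul{n\leq x} f(P^+(n)) - \BEu{n\leq x} f(P^+(n)) = \frac{1}{\log x}\sum_{n\leq x}\frac{\mu(n)f(P^-(n))\log n}{n} + o(1),
\]
from which both implications in Theorem~\ref{mainthm_log} follow immediately.

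I would start from Alladi's duality \eqref{duality1}, which, since $f(1)=0$, gives $f(P^+(n)) = -\sum_{d\mid n}\mu(d)f(P^-(d))$ for every $n\geq 1$. Dividing by $n$, summing over $n\leq x$, and swapping the order of summation yields
\[
\sum_{n\leq x}\frac{f(P^+(n))}{n} = -\sum_{d\leq x}\frac{\mu(d)f(P^-(d))}{d}\sum_{m\leq x/d}\frac{1}{m}.
\]
Inserting the standard harmonic estimate $\sum_{m\leq y}1/m=\log y+\gamma+O(1/y)$ and splitting $\log(x/d)=\log x-\log d$, I would rewrite the right-hand side as
\[
-(\log x)\,A(x)+B(x)-\gamma A(x)+O(1),
\]
where $A(x):=\sum_{d\leq x}\mu(d)f(P^-(d))/d$ and $B(x):=\sum_{d\leq x}\mu(d)f(P^-(d))(\log d)/d$; the $O(1)$ absorbs the $O(d/x)$ tail after summation, using only the boundedness of $f$.

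To finish, I would invoke \eqref{eqn_alladi3.8}: dividing it by $x$ and using that $f$ is bounded yields
\[
\BEu{n\leq x} f(P^+(n)) = -A(x)+o(1),
\]
so in particular $A(x)=O(1)$. Dividing the earlier identity by $\log x$, using $\sum_{n\leq x}1/n=\log x+O(1)$ to convert $\sum f(P^+(n))/n$ into $\BEul{n\leq x} f(P^+(n))$, and absorbing the $\gamma A(x)/\log x$ term into an $o(1)$ error thanks to the boundedness of $A(x)$, one arrives at the identity displayed above. The equivalence stated in the theorem then reads off immediately.

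The only subtle point I foresee is the appearance of the $\gamma A(x)$ term, whose size is not a priori small enough to be dropped after division by $\log x$; it is precisely \eqref{eqn_alladi3.8} that guarantees $A(x)=O(1)$ and allows this parasitic term to vanish in the limit. Everything else is a routine interchange of summation together with the harmonic partial sum asymptotic.
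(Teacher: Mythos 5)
Your proposal is correct and follows essentially the same route as the paper: the convolution identity obtained from \eqref{duality1} after dividing by $n$ and interchanging summation is exactly the paper's \eqref{eqn_pf1}, the harmonic-sum expansion matches \eqref{eqn_2nd_way}, and \eqref{eqn_alladi3.8} is invoked in the same two places (to bound $A(x)$ and to replace $-A(x)$ by the Ces\`aro average). Nothing further is needed.
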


\begin{proof}
Let $h(x)=\sum_{n\leq x} \frac1n$. On the one hand, by \eqref{duality1} we have
\begin{align}
	\sum_{n\leq x} \frac{\mu(n)f(P^-(n))}{n} h\big(\frac{x}n\big)&= \sum_{n\leq x} \frac{\mu(n)f(P^-(n))}{n} \sum_{m\leq x/n} \frac1m \nonumber\\
	&= \sum_{mn\leq x} \frac{\mu(n)f(P^-(n))}{mn} \nonumber\\
	&= \sum_{c\leq x} \frac1c \sum_{n\mid c} \mu(n)f(P^-(n)) \nonumber\\
	&= -\sum_{c\leq x} \frac{f(P^+(c))}c. \label{eqn_pf1}
\end{align}

Then by $h(x)=\log x+\gamma+O(1/x)$ where $\gamma$ is Euler's constant, we have
\begin{multline}
	\sum_{n\leq x} \frac{\mu(n)f(P^-(n))}{n} h\big(\frac{x}n\big)= \sum_{n\leq x} \frac{\mu(n)f(P^-(n))}{n} \Big(\log\big(\frac{x}n\big)+\gamma+ O\big(\frac{n}x\big)\Big) \\
	= \log x\sum_{n\leq x} \frac{\mu(n)f(P^-(n))}{n} - \sum_{n\leq x} \frac{\mu(n)\log n }{n}f(P^-(n)) + \gamma\sum_{n\leq x} \frac{\mu(n)f(P^-(n))}{n}+ O(1).\label{eqn_2nd_way}
\end{multline}

By \eqref{eqn_alladi3.8},  the third term in \eqref{eqn_2nd_way} is bounded, since
\begin{equation}\label{Alladi_cor}
	\sum_{n\leq x} \frac{\mu(n)f(P^-(n))}{n}=O(1).
\end{equation}
It follows that
\begin{equation}\label{eqn_pf2}
	\sum_{n\leq x} \frac{\mu(n)f(P^-(n))}{n} h\big(\frac{x}n\big)\\
	= \log x\sum_{n\leq x} \frac{\mu(n)f(P^-(n))}{n} - \sum_{n\leq x} \frac{\mu(n)\log n }{n}f(P^-(n)) +  O(1).
\end{equation}

Combining \eqref{eqn_pf1}
 and \eqref{eqn_pf2} together and using \eqref{eqn_alladi3.8} again, we get that
\begin{align}
	\frac{1}{\log x}\sum_{n\leq x} \frac{\mu(n)\log n }{n}f(P^-(n)) &= \sum_{n\leq x} \frac{\mu(n)f(P^-(n))}{n} + \frac{1}{\log x} \sum_{n\leq x} \frac{f(P^+(n))}n +O\Big(\frac{1}{\log x}\Big) \nonumber\\
	&=\frac{1}{\log x} \sum_{n\leq x} \frac{f(P^+(n))}n  -\frac1x\sum_{n\leq x} f(P^+(n))+o(1), \label{log_alldi_equiv_form}
\end{align}
which completes the proof of Theorem~\ref{mainthm_log}.
\end{proof}

\section{Proof of Theorem~\ref{mainthm_logAlladi}}

To prove \eqref{logAlladi_min} in Theorem~\ref{mainthm_logAlladi}, we cite a result of Kural et al. on the distribution of the largest prime factors of integers.
\begin{theorem}[{\cite[Theorem 3.1]{KuralMcDonaldSah2020}}]
\label{kms}
Let $S$ be a set of primes of natural density $\delta(S)$, then we have
\[
\lim_{x\to\infty}\frac{1}{x}\sum_{\substack{1\leq n\leq x\\ P^+(n)\in S}}1 =\delta(S).
\]
	
\end{theorem}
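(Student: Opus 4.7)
The plan is to rewrite the counting function by decomposing each $n\ge 2$ as $n=pm$ with $p=P^+(n)$ and $m=n/p$ a $p$-smooth integer (i.e.\ $P^+(m)\le p$). Writing $\Psi(y,z):=\#\{m\le y:P^+(m)\le z\}$, this bijection yields the identity
$$\sum_{\substack{1\le n\le x\\ P^+(n)\in S}} 1 \;=\; \sum_{\substack{p\in S\\ p\le x}}\Psi(x/p,p),$$
and for $S$ equal to the set of all primes the right-hand side is exactly $\lfloor x\rfloor -1\sim x$. Hence the task reduces to showing that restricting the outer sum to $p\in S$ multiplies the answer by $\delta(S)+o(1)$.

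Next, I will partition the primes into logarithmic scales $I_k:=(x^{1/(k+1)},x^{1/k}]$ for $k=1,\dots ,K$, together with a tail range $p\le x^{1/(K+1)}$. For $p\in I_k$ one has $u:=\log x/\log p\in [k,k+1)$, and the Dickman--de Bruijn asymptotic supplies $\Psi(x/p,p)=(x/p)\rho(u-1)+o(x/p)$ uniformly in the range, where $\rho$ denotes the Dickman function. Two classical prime-sum estimates control each piece: Mertens' theorem gives $\sum_{p\in I_k}1/p\to \log((k+1)/k)$, and partial summation applied to the hypothesis $\pi_S(t)/\pi(t)\to\delta(S)$ upgrades this to $\sum_{p\in S\cap I_k}1/p\to \delta(S)\log((k+1)/k)$ for each fixed $k$. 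Combining these ingredients yields, for each fixed $k$,
$$\frac{1}{x}\sum_{p\in S\cap I_k}\Psi(x/p,p)\;\longrightarrow\;\delta(S)\int_k^{k+1}\frac{\rho(u-1)}{u}\,du \qquad (x\to\infty).$$

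Summing over $k\le K$ and controlling the small-prime tail by $\sum_{p\le x^{1/(K+1)}}\Psi(x/p,p)\le\Psi(x,x^{1/(K+1)})\sim x\rho(K+1)$, which vanishes after normalization by $x$ as $K\to\infty$ (since $\rho$ decays super-exponentially), I first let $x\to\infty$ and then $K\to\infty$ to obtain $N_S(x)/x\to \delta(S)\int_1^\infty \rho(u-1)/u\,du$. The value of this integral is pinned to $1$ by applying the same procedure with $S$ equal to all primes and comparing to the unrestricted identity $\sum_{p\le x}\Psi(x/p,p)=\lfloor x\rfloor -1$; the final answer is therefore $\delta(S)$, as required.

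The principal obstacle is uniformity. Both the Dickman asymptotic and the natural-density estimate each carry an $o(1)$ error that must be controlled compatibly as $k$ grows, so that the two limits $x\to\infty$ and $K\to\infty$ may be interchanged cleanly. In particular, the aggregate density hypothesis $\pi_S(t)/\pi(t)\to\delta(S)$ must be converted, via partial summation on each $I_k$, into weighted prime sums whose error is summable against the Dickman weights $\int_k^{k+1}\rho(u-1)/u\,du$.
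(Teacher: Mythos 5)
The paper does not prove this statement at all: Theorem~\ref{kms} is quoted verbatim from Kural--McDonald--Sah \cite{KuralMcDonaldSah2020} and used as a black box, so there is no internal proof to compare yours against. Judged on its own merits, your argument is correct in outline and follows the standard route (essentially that of the cited source): the exact identity $\sum_{n\le x,\,P^+(n)\in S}1=\sum_{p\in S,\,p\le x}\Psi(x/p,p)$, the Dickman--de Bruijn asymptotic $\Psi(x/p,p)\sim(x/p)\rho(u-1)$ with $u=\log x/\log p$ on each fixed scale $I_k$, Mertens plus partial summation against the hypothesis $\pi_S(t)/\pi(t)\to\delta(S)$ to evaluate $\sum_{p\in S\cap I_k}\rho(u_p-1)/p$, and the tail bound $\Psi(x,x^{1/(K+1)})\sim x\rho(K+1)$; normalising the limiting integral by the case where $S$ is the set of all primes is a clean way to avoid evaluating $\int_1^\infty\rho(u-1)u^{-1}\,du=1$ directly. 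Two small points deserve attention. First, on the top scale $I_1=(x^{1/2},x]$ the quantity $x/p$ need not tend to infinity, so the Dickman asymptotic does not literally apply there; this range is nonetheless trivial, since $x/p<p$ forces $\Psi(x/p,p)=\lfloor x/p\rfloor$, which matches $(x/p)\rho(u-1)=x/p$ up to $O(1)$ per prime, i.e.\ $o(x)$ in total. Second, the ``uniformity obstacle'' you flag in your closing paragraph is not actually an obstacle in your set-up: for each fixed $K$ only finitely many scales occur, each contributing its own $o_K(1)$ as $x\to\infty$, and the two limits are taken in order via a $\limsup$/$\liminf$ sandwich rather than genuinely interchanged, so no uniformity in $k$ is needed.
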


\begin{proof}[Proof of \eqref{logAlladi_min}]

Suppose the natural density of $S$ over primes is $\delta(S)$. Take $f(n)$ in \eqref{log_alldi} to be the characteristic function of $S$ defined by 
\[
f(n)=
\begin{cases}
	1 & \text{if } n\in S;\\
	0 & \text{if } n\notin S.
\end{cases}
\]

Then by Theorem~\ref{kms}, we have
\[
\lim_{x\to\infty}\BEu{1\leq n\leq x} f(P^+(n))=\lim_{x\to\infty}\frac{1}{x}\sum_{\substack{1\leq n\leq x\\ P^+(n)\in S}}1 =\delta(S).
\]
That is, the natural density of the set $\set{n\in \N: P^+(n)\in S}$ exists. It follows that the logarithmic density of the set $\set{n\in \N: P^+(n)\in S}$ also exists and 
\[
\lim_{x\to\infty}\BEul{1\leq n\leq x} f(P^+(n))= \delta(S).
\]

Thus, by Theorem~\ref{mainthm_log} we have
\[
	\lim_{x\to \infty}\frac{1}{\log x}\sum_{n\leq x} \frac{\mu(n)\log n }{n}f(P^-(n))=0,
\]
which gives \eqref{logAlladi_min}.
\end{proof}

The proof of \eqref{logAlladi_max} is similar to that of \eqref{logAlladi_min}. 

\begin{proof}[Proof of \eqref{logAlladi_max}]
By \cite[(9.15)-(9.16)]{Alladi1982}, we have an estimate similar to \eqref{eqn_alladi3.8} that
\begin{equation}\label{eqn_alladi3.8_max}
\sum_{n\leq x} f(P^-(n))= -x \sum_{n\leq x} \frac{\mu(n)f(P^+(n))}{n}+o(x).
\end{equation}

Then similar to the argument in the proof of Theorem~\ref{mainthm_log}, by \eqref{duality2} and \eqref{eqn_alladi3.8_max} one can obtain that
\begin{equation}\label{max_key_eqn}
	\frac{1}{\log x}\sum_{n\leq x} \frac{\mu(n)\log n }{n}f(P^+(n)) = \frac{1}{\log x} \sum_{n\leq x} \frac{f(P^-(n))}n  -\frac1x\sum_{n\leq x} f(P^-(n))+o(1).
\end{equation}

Take $f(n)$ to be the characteristic function of $S$. By \cite[(9.15)]{Alladi1982}, the following limit exists:
\begin{equation}\label{nd_min}
	\lim_{x\to\infty}\BEu{1\leq n\leq x} f(P^-(n))=\lim_{x\to\infty}\frac{1}{x}\sum_{\substack{1\leq n\leq x\\ P^-(n)\in S}}1,
\end{equation}
which is equal to the natural density of the set $\set{n\in \N: P^-(n)\in S}$. It follows that the logarithmic density of the set $\set{n\in \N: P^-(n)\in S}$ also exists and 
\begin{equation}\label{ld_min}
		\lim_{x\to\infty}\BEul{1\leq n\leq x} f(P^-(n))=\lim_{x\to\infty}\BEu{1\leq n\leq x} f(P^-(n)).
\end{equation}
Thus, \eqref{logAlladi_max} follows immediately by \eqref{max_key_eqn} and \eqref{ld_min}.
\end{proof}

\section{The case on the higher power of logarithm}

In this section, we consider the logarithmic analogue of the PNT with respect to the higher power of logarithm. Let $k\ge1$ be an integer. Write $M(x)\colonequals \sum_{n\leq x} \frac{\mu(n)}{n}$. Then by partial summation, we have
\begin{equation}\label{referee_comment}
	\sum_{n\leq x} \frac{\mu(n)\log^k n}{n} = \log^k(x) M(x) - k\int_1^x \frac{M(t)\log^{k-1} t}{t} dt.
\end{equation}
Since the integral in the right-hand side of \eqref{referee_comment} converges, we have that 
\begin{equation}\label{log_pnt_k}
	\lim_{x\to\infty} \frac1{\log^k x}\sum_{n\leq x} \frac{\mu(n)\log^k n}{n} =0
\end{equation}
holds if and only if 
\begin{equation}\label{pnt_mu}
		\lim_{x\to\infty} M(x)=0.
\end{equation}
This implies that for any integer $k\ge1$, the PNT is equivalent to the assertion that \eqref{log_pnt_k} holds. In this section, we will show that the refinement of \eqref{log_pnt_k} under the same restriction $P^{-}(n)\in S$ or $P^{+}(n)\in S$ as Theorem~\ref{mainthm_logAlladi} still holds. We are deeply grateful to the referee for introducing  the equivalent form \eqref{log_pnt_k} of the PNT and the idea on deriving the equivalence between \eqref{log_pnt_k} and \eqref{pnt_mu} from the identity \eqref{referee_comment} to us, and for suggesting we generalize previous results to the case on the higher power of logarithm. 

In the following, we will follow the spirit of Landau's thesis \cite{Landau1899} to give a proof of the equivalence between \eqref{log_pnt_k} and \eqref{pnt_mu} in an elementary way. For an integer $k\ge 2$, let $d_k(n)$ be the $k$th divisor function definded by 
\[
d_k(n)\colonequals\#\set{(a_1,\dots, a_k)\in \Z^k: a_1\cdots a_k=n, a_1,\dots, a_k \geq1}.
\]
For the case $k=2$, $d_2(n)$ is the classical divisor function. For convenience, we let $d_1(n)=1$ for all $n\ge1$, and $d_0(n)= 1$ if $n=1$ and zero otherwise. Then for any $k\ge1$, we have
\begin{equation}\label{eqn_power_reduced}
	\mu\ast d_k =d_{k-1},
\end{equation}
where the operator $\ast$ denotes the Dirichlet convolution. On the distribution of $d_k(n)$, a well-known elementary theorem states that there exists a polynomial $P_{k}(x)$ of degree $k-1$ and leading coefficient $1/(k-1)!$ such that
\begin{equation}
	\sum_{n\leq x} d_k(n)= x P_{k}(\log x) + O(x^{1-\frac1k}\log^{k-2}x)
\end{equation}
for $k\ge2$, e.g., see \cite[Chapter XII]{Titchmarsh1986}.
Then by partial summation, there exists a polynomial $Q_k(x)$ of degree $k$ and leading coefficient $1/k!$ such that
\begin{equation}\label{divisor_fcn_logaverage}
	\sum_{n\leq x} \frac{d_k(n)}n=  Q_k(\log x) + O(x^{-\frac1k}\log^{k-2}x).
\end{equation}

\subsection{The higher power logarithmic PNT}

By \cite[Lemmas 1 and 2]{Mangoldt1897}, we have
\begin{equation}
	\Big|\sum_{n\leq x} \frac{\mu(n)}{n} \Big| \leq 1
\end{equation}
and 
\begin{equation}\label{thm_Mangoldt}
	\Big|\log x\sum_{n\leq x} \frac{\mu(n)}{n} - \sum_{n\leq x} \frac{\mu(n)\log n}{n} \Big| \leq 3+\gamma
\end{equation}
for all $x\ge1$. This implies that
\begin{align}
	& \sum_{n\leq x} \frac{\mu(n)}{n}=O(1), \\
	& \sum_{n\leq x} \frac{\mu(n)}{n} \log(\frac{x}{n})=O(1). \label{thm_Mangoldt_1}
\end{align}

In the next theorem, we will apply \eqref{eqn_power_reduced} and \eqref{divisor_fcn_logaverage} to generalize  \eqref{thm_Mangoldt_1} to the summation with higher power of logarithm.

\begin{proposition}\label{thm_Mangoldt_k}
	For any integer $k\ge1$, we have
	\begin{equation}\label{thm_Mangoldt_k_estimate}
		\sum_{n\leq x} \frac{\mu(n)}{n} \log^k(\frac{x}{n}) = O(\log^{k-1}x).
	\end{equation}
\end{proposition}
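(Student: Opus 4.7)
The plan is to argue by induction on $k$, with the base case $k=1$ furnished by \eqref{thm_Mangoldt_1}. For the inductive step, the key input is the convolution identity \eqref{eqn_power_reduced}. Interchanging summation in $\sum_{n \leq x} d_{k-1}(n)/n$ gives
\[
	\sum_{n \leq x} \frac{d_{k-1}(n)}{n} = \sum_{a \leq x} \frac{\mu(a)}{a} \sum_{b \leq x/a} \frac{d_k(b)}{b},
\]
and substituting \eqref{divisor_fcn_logaverage} for the inner sum rewrites the right-hand side as $\sum_{a \leq x}\frac{\mu(a)}{a}Q_k(\log(x/a))$ plus an error.

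Expanding $Q_k(y) = y^k/k! + \sum_{j=0}^{k-1} c_{k,j} y^j$, this main term equals $\frac{1}{k!}\sum_{a \leq x}\frac{\mu(a)}{a}\log^k(x/a)$ plus a linear combination of the sums $\sum_{a \leq x}\frac{\mu(a)}{a}\log^j(x/a)$ for $0 \leq j \leq k-1$. By the inductive hypothesis, each such sum with $1 \leq j \leq k-1$ is $O(\log^{j-1}x)$, and the $j=0$ sum is $O(1)$ by Mangoldt's first estimate, so their total contribution is $O(\log^{k-2}x)$. On the other hand, $\sum_{n \leq x} d_{k-1}(n)/n = O(\log^{k-1}x)$ by \eqref{divisor_fcn_logaverage} with parameter $k-1$. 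Rearranging will therefore yield the claimed bound once the error is controlled.

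The error term has the shape $\sum_{a \leq x}\frac{1}{a}(x/a)^{-1/k}\log^{k-2}(x/a)$, which I would bound by $x^{-1/k}\log^{k-2}x\cdot \sum_{a \leq x} a^{1/k-1} \ll \log^{k-2}x$ via integral comparison with $\sum_{a \leq x} a^{1/k - 1} \ll x^{1/k}$. The main obstacle is choosing the correct direction of the argument: one must expand $d_{k-1} = \mu \ast d_k$ (rather than, say, applying partial summation directly to $\sum \mu(n)/n$), so that the induction peels off exactly one power of the logarithm while the polynomial error from $Q_k$ is absorbed at one degree lower by the inductive hypothesis, leaving only a loss of a single factor of $\log x$ between the left- and right-hand sides.
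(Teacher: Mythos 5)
Your argument is correct and is essentially identical to the paper's proof: both induct on $k$, use the identity $\mu\ast d_k=d_{k-1}$ to write $\sum_{c\le x}d_{k-1}(c)/c=\sum_{n\le x}\frac{\mu(n)}{n}\sum_{m\le x/n}\frac{d_k(m)}{m}$, substitute \eqref{divisor_fcn_logaverage}, isolate the top-degree term $\frac{1}{k!}\sum_{n\le x}\frac{\mu(n)}{n}\log^k(x/n)$, and absorb the lower-degree sums via the inductive hypothesis and the error term via the same $x^{-1/k}$ integral comparison. No substantive differences.
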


\begin{proof}
We prove \eqref{thm_Mangoldt_k_estimate} by doing induction on $k$. For $k=1$,  \eqref{thm_Mangoldt_k_estimate} follows by  \eqref{thm_Mangoldt_1}. Let $k\geq2$, and suppose \eqref{thm_Mangoldt_k_estimate} holds for  the cases $1,\dots, k-1$. First, by \eqref{eqn_power_reduced} and \eqref{divisor_fcn_logaverage} we observe that
\begin{multline}\label{thm_Mangoldt_k_pfeqn1}
	\sum_{n\leq x} \frac{\mu(n)}{n} \sum_{m\leq \frac{x}{n}} \frac{d_k(m)}{m}  = \sum_{mn\leq x} \frac{ \mu(n) d_k(m) }{mn}  = \sum_{c\leq x} \frac1c \sum_{n\mid c} \mu(n)d_k(\frac{c}{n}) = \sum_{c\leq x} \frac{d_{k-1}(c)}{c} \\= O(\log^{k-1} x). 
\end{multline}

Suppose $Q_k(x)=\sum_{i=0}^kc_ix^i, c_i\in \R$ and $c_k=1/k!$. Then by assumption we have 
$$\sum_{n\leq x} \frac{\mu(n)}{n} \log^i(\frac{x}{n}) = O(\log^{i-1}x), i=1,\dots, k-1.$$
It follows that
\begin{align}
	\sum_{n\leq x} \frac{\mu(n)}{n} \sum_{m\leq \frac{x}{n}} \frac{d_k(m)}{m} & = \frac{1}{k!} \sum_{n\leq x} \frac{\mu(n)}{n} \log^k(\frac{x}{n}) + \sum_{i=1}^{k-1}c_i \sum_{n\leq x} \frac{\mu(n)}{n} \log^i(\frac{x}{n})  \nonumber\\
	&\quad + c_0 \sum_{n\leq x} \frac{\mu(n)}{n}  + O\of{\sum_{n\leq x} \frac{1}{n} (\frac{n}{x})^{\frac1k}\log^{k-2}(\frac{x}{n})} \nonumber\\
	&= \frac{1}{k!} \sum_{n\leq x} \frac{\mu(n)}{n} \log^k(\frac{x}{n}) +  O(\log^{k-2} x).  \label{thm_Mangoldt_k_pfeqn2}
\end{align}
Combing \eqref{thm_Mangoldt_k_pfeqn1} and \eqref{thm_Mangoldt_k_pfeqn2} together, we obtain that
	\begin{equation}
		\sum_{n\leq x} \frac{\mu(n)}{n} \log^k(\frac{x}{n}) = O(\log^{k-1}x).
	\end{equation}
Thus, by induction \eqref{thm_Mangoldt_k_estimate} holds for all $k\ge1$.
\end{proof}

\begin{remark}
	In \cite{Shapiro1950}, Shapiro gave an asymptotic formula for \eqref{thm_Mangoldt_k_estimate}. One may apply the method in the proof of Proposition~\ref{thm_Mangoldt_k} to obtain an asymptotic estimate of \eqref{thm_Mangoldt_k_estimate} as well.
\end{remark}

Now, similar to \eqref{thm_Mangoldt}, we build a relationship between $\sum_{n\leq x} \frac{\mu(n)\log^kn}{n}$ and $\log^k x\sum_{n\leq x} \frac{\mu(n)}{n}$ by doing induction on $k$ for $k\ge1$.

\begin{proposition}\label{thm_Landau_k}
	For any $k\ge1$, we have
	\begin{equation}\label{thm_Landau_k_eqn}
		\sum_{n\leq x} \frac{\mu(n)\log^kn}{n} = \log^k x \sum_{n\leq x} \frac{\mu(n)}{n} + O(\log^{k-1}x).
	\end{equation}
\end{proposition}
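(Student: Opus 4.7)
The plan is to prove Proposition~\ref{thm_Landau_k} by induction on $k$, using Proposition~\ref{thm_Mangoldt_k} as the workhorse estimate and the binomial expansion of $\log^k(x/n)$ as the bridge between $\sum_n \mu(n)\log^k(x/n)/n$ and $\sum_n \mu(n)\log^k n/n$. The base case $k=1$ is immediate from \eqref{thm_Mangoldt} (equivalently \eqref{thm_Mangoldt_1}).

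For the induction step, I would assume that \eqref{thm_Landau_k_eqn} holds for all exponents $j \in \{1,\dots,k-1\}$ and then expand
\[
\log^k\!\big(\tfrac{x}{n}\big) = (\log x - \log n)^k = \sum_{j=0}^{k} \binom{k}{j}(-1)^j \log^{k-j}(x)\,\log^j n.
\]
Multiplying by $\mu(n)/n$ and summing over $n\le x$ would give
\[
\sum_{n\le x}\frac{\mu(n)}{n}\log^k\!\big(\tfrac{x}{n}\big) = \sum_{j=0}^{k} \binom{k}{j}(-1)^j \log^{k-j}(x)\sum_{n\le x}\frac{\mu(n)\log^j n}{n}.
\]
Proposition~\ref{thm_Mangoldt_k} bounds the left-hand side by $O(\log^{k-1}x)$.

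The next step is to substitute the induction hypothesis into the inner sums for $j=1,\dots,k-1$, which replaces each $\sum_{n\le x}\mu(n)\log^j n/n$ by $\log^j(x)\sum_{n\le x}\mu(n)/n + O(\log^{j-1}x)$; after multiplication by $\log^{k-j}(x)$ every error term is absorbed into $O(\log^{k-1}x)$. The $j=0$ term is already in the desired form, and the $j=k$ term is the quantity we want to isolate. Combining everything and using the binomial identity $\sum_{j=0}^{k}\binom{k}{j}(-1)^j=0$, i.e.\ $\sum_{j=0}^{k-1}\binom{k}{j}(-1)^j = (-1)^{k+1}$, the terms proportional to $\log^k(x)\sum_{n\le x}\mu(n)/n$ collapse to exactly $(-1)^{k+1}\log^k(x)\sum_{n\le x}\mu(n)/n$. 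After moving the $j=k$ term to the other side and multiplying through by $(-1)^k$, one arrives at \eqref{thm_Landau_k_eqn}.

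There is no serious obstacle here: the argument is a clean induction once the binomial expansion is written down, and Proposition~\ref{thm_Mangoldt_k} provides exactly the input needed. The only point that requires a little care is bookkeeping of the error terms to confirm that every $O(\log^{j-1}x)\cdot\log^{k-j}(x)$ contribution with $1\le j\le k-1$ fits inside $O(\log^{k-1}x)$, and that the combinatorial identity $(1-1)^k=0$ cleanly eliminates all but two of the main terms.
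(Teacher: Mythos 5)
Your proposal is correct and follows essentially the same route as the paper: induction on $k$, binomial expansion of $\log^k(x/n)$, substitution of the induction hypothesis for the intermediate powers, the identity $\sum_{j=0}^{k-1}\binom{k}{j}(-1)^j=(-1)^{k+1}$ to collapse the main terms, and Proposition~\ref{thm_Mangoldt_k} to bound the left-hand side by $O(\log^{k-1}x)$. The error-term bookkeeping you flag is indeed the only point of care, and it works exactly as you describe.
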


\begin{proof}
	For $k=1$, \eqref{thm_Landau_k_eqn} follows by  \eqref{thm_Mangoldt_1}. Let $k\geq2$, and suppose \eqref{thm_Landau_k_eqn} holds for  the cases $1,\dots, k-1$. Then
	\begin{align}
		&\quad \sum_{n\leq x} \frac{\mu(n)}{n} \log^k(\frac{x}{n}) \nonumber\\
		&= \sum_{n\leq x} \frac{\mu(n)}{n}(\log x-\log n)^k = \sum_{i=0}^k \binom{k}{i} (-1)^i \log^{k-i}x \sum_{n\leq x} \frac{\mu(n)\log^i n}{n} \nonumber\\
		&= \log^k x \sum_{n\leq x} \frac{\mu(n)}{n}  + (-1)^k \sum_{n\leq x} \frac{\mu(n)\log^kn}{n}   + \sum_{i=1}^{k-1} \binom{k}{i} (-1)^i \log^{k-i}x \sum_{n\leq x} \frac{\mu(n)\log^i n}{n}\nonumber\\
		&= \log^k x \sum_{n\leq x} \frac{\mu(n)}{n}  + (-1)^k \sum_{n\leq x} \frac{\mu(n)\log^kn}{n} \nonumber\\
		&\quad + \sum_{i=1}^{k-1} \binom{k}{i} (-1)^i \log^{k-i}x \Big(\log^i x \sum_{n\leq x} \frac{\mu(n)}{n} + O(\log^{i-1}x)\Big) \nonumber\\
		&= \Big(\sum_{i=0}^{k-1} \binom{k}{i} (-1)^i \Big) \cdot \log^k x \sum_{n\leq x} \frac{\mu(n)}{n}  + (-1)^k \sum_{n\leq x} \frac{\mu(n)\log^kn}{n} +  O(\log^{k-1}x) \nonumber\\
		&= (-1)^{k+1} \log^k x \sum_{n\leq x} \frac{\mu(n)}{n}  + (-1)^k \sum_{n\leq x} \frac{\mu(n)\log^kn}{n} +  O(\log^{k-1}x).\label{thm_Landau_k_pf1}
	\end{align}
	
	By Proposition~\ref{thm_Mangoldt_k}, we have
	\begin{equation}\label{thm_Landau_k_pf2}
		\sum_{n\leq x} \frac{\mu(n)}{n} \log^k(\frac{x}{n}) = O(\log^{k-1}x).
	\end{equation}	
Then combing \eqref{thm_Landau_k_pf1} and \eqref{thm_Landau_k_pf2} gives us that
	\begin{equation}
		\sum_{n\leq x} \frac{\mu(n)\log^kn}{n} = \log^k x \sum_{n\leq x} \frac{\mu(n)}{n} + O(\log^{k-1}x).
	\end{equation}
Thus, by induction \eqref{thm_Landau_k_eqn} holds for all $k\ge1$.
\end{proof}

By Proposition~\ref{thm_Landau_k}, we find that
\begin{equation}\label{thm_Landau_k_form2}
	\frac{1}{\log^k x}\sum_{n\leq x} \frac{\mu(n)\log^kn}{n} =  \sum_{n\leq x} \frac{\mu(n)}{n} + O\Big(\frac{1}{\log x}\Big).
\end{equation}
From \eqref{thm_Landau_k_form2}, we conclude that \eqref{log_pnt_k} is equivalent to  \eqref{pnt_mu}, which is equivalent to the PNT.

\begin{theorem}
Let $k\ge1$ be any given integer. Then the prime number theorem is equivalent to the assertion that
	\begin{equation}\label{log_pnt_k_rep}
		\lim_{x\to\infty}\frac1{\log^k x}\sum_{n\leq x} \frac{\mu(n)\log^kn}{n} = 0.
	\end{equation}
\end{theorem}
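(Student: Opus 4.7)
The plan is essentially immediate at this stage, since the heavy lifting has been absorbed into Proposition~\ref{thm_Landau_k}. First, I would recall that by the classical equivalence \eqref{PNT} of von Mangoldt, the prime number theorem is equivalent to the assertion $\lim_{x\to\infty}M(x)=0$; so it suffices to prove that \eqref{log_pnt_k_rep} is equivalent to $\lim_{x\to\infty}M(x)=0$.

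Next, I would divide both sides of the identity in Proposition~\ref{thm_Landau_k} by $\log^k x$, which is precisely the passage to \eqref{thm_Landau_k_form2}:
\[
	\frac{1}{\log^k x}\sum_{n\leq x}\frac{\mu(n)\log^k n}{n} = M(x) + O\of{\frac{1}{\log x}}.
\]
Since the error term tends to $0$ as $x\to\infty$, the left-hand side tends to $0$ if and only if $\lim_{x\to\infty}M(x)=0$. Combining this with \eqref{PNT} gives the claimed equivalence for every integer $k\ge 1$.

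The main obstacle has already been cleared: it is the inductive identity in Proposition~\ref{thm_Landau_k}, which rests on the von Mangoldt type bound \eqref{thm_Mangoldt_1} as the base case and on Proposition~\ref{thm_Mangoldt_k} to control the auxiliary sums $\sum_{n\leq x}\mu(n)\log^i(x/n)/n$ for $1\leq i\leq k-1$. As noted in the excerpt, one may alternatively derive the equivalence via partial summation from \eqref{referee_comment} together with convergence of the resulting integral; however, routing through Proposition~\ref{thm_Landau_k} keeps the argument entirely elementary and in the spirit of Landau's thesis \cite{Landau1899}.
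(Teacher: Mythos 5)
Your proposal is correct and follows the paper's own route exactly: the paper likewise divides the identity of Proposition~\ref{thm_Landau_k} by $\log^k x$ to obtain \eqref{thm_Landau_k_form2} and then invokes the classical equivalence of the PNT with $\lim_{x\to\infty}M(x)=0$. Nothing is missing.
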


Now, we show that under the same restriction $P^{-}(n)\in S$ or $P^{+}(n)\in S$ as Theorem~\ref{mainthm_logAlladi}, the refinement of \eqref{log_pnt_k_rep} holds for any $k\ge1$. This result can be obtained from the following lemma.

\begin{lemma}\label{lem_logweight}
	If an arithmetic function $a:\N\to\C$ satisfies that
	\begin{equation}
		\sum_{n\leq x} a(n) = o(\log x),
	\end{equation}
	then for any $k\ge2$, we have
	\begin{equation}\label{lem_logweight_eqn}
		\sum_{n\leq x} a(n)\log^{k-1} n = o(\log^k x).
	\end{equation}
\end{lemma}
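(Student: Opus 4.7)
My plan is to prove Lemma~\ref{lem_logweight} by a direct application of Abel summation (partial summation), converting the weighted sum into the unweighted partial sums $A(x) \colonequals \sum_{n\le x} a(n)$, which we control by hypothesis. Concretely, I would write
\[
\sum_{n\leq x} a(n)\log^{k-1} n = A(x)\log^{k-1} x - (k-1)\int_1^x A(t)\,\frac{\log^{k-2} t}{t}\,dt,
\]
so that the task reduces to bounding the boundary term and the integral separately.

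For the boundary term, the hypothesis $A(x)=o(\log x)$ immediately gives $A(x)\log^{k-1} x = o(\log^k x)$. For the integral, I would use a standard $\varepsilon$--$T$ argument: given $\varepsilon>0$, pick $T=T(\varepsilon)$ so that $|A(t)|\le \varepsilon \log t$ for all $t\ge T$, and split
\[
\int_1^x A(t)\,\frac{\log^{k-2} t}{t}\,dt \;=\; \int_1^T + \int_T^x.
\]
The first piece is $O_T(1)$, while the second is bounded in absolute value by
\[
\varepsilon \int_T^x \frac{\log^{k-1} t}{t}\,dt \;\le\; \frac{\varepsilon}{k}\log^k x.
\]
Letting $x\to\infty$ first and then $\varepsilon\to 0$, we obtain that the whole integral is $o(\log^k x)$, and combining with the boundary term yields \eqref{lem_logweight_eqn}.

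I do not anticipate a serious obstacle here; the argument is a textbook Abel-summation-plus-$\varepsilon/\delta$ estimate. The one point requiring a little care is that the estimate on the second piece of the split integral must be uniform in $x$ (not merely pointwise), which is automatic from the explicit computation $\int_T^x t^{-1}\log^{k-1} t\,dt = (\log^k x - \log^k T)/k$. The hypothesis $k\ge 2$ is used only to ensure the exponent $k-2$ is nonnegative so that $\log^{k-2} t$ is bounded on $[1,T]$ (for $k=2$ it is $\equiv 1$); for $k=1$ the statement would degenerate to the hypothesis itself.
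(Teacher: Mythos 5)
Your proof is correct and follows essentially the same route as the paper: Abel summation to reduce to $A(x)=\sum_{n\le x}a(n)$, then an $\varepsilon$-splitting of the integral at a threshold $T$, with the boundary term handled directly by the hypothesis. No substantive difference.
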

\begin{proof}
	Let $A(x)=\sum_{n\leq x} a(n)$. Then by partial summation, we have
	\begin{equation}
		\sum_{n\leq x} a(n)\log^{k-1} n = A(x) \log^{k-1} x- (k-1) \int_1^x \frac{A(t)\log^{k-2} t}{t}dt.
	\end{equation}
	
	For any $\ve>0$, since $A(x)=o(\log x)$, there exists some $x_1>1$ such that $|A(x)|\leq \ve \log x$ for all $x\geq x_1$. For such $x_1$, there exists some $x_2>1$ such that
	\begin{equation}\label{mainthm_logAlladi_k_pf1}
		\Big|\int_1^{x_1} \frac{A(t)\log^{k-2} t}{t}dt \Big|\leq \frac{\ve}{k-1} \log^k x
	\end{equation}
	for all $x\ge x_2$. Let $x_0=\max\set{x_1,x_2}$. Then for all $x\ge x_0$, we have \eqref{mainthm_logAlladi_k_pf1} and
	\begin{equation}\label{mainthm_logAlladi_k_pf2}
		\Big|\int_{x_1}^x \frac{A(t)\log^{k-2} t}{t}dt \Big|\leq  \int_{x_1}^x \frac{|A(t)|\log^{k-2} t}{t}dt  \leq  \ve \int_{x_1}^x \frac{\log^{k-1} t}{t}dt < \frac{\ve}{k} \log^k x.
	\end{equation}
	It follows by the triangle inequality that
	\begin{equation}
		\Big|\sum_{n\leq x} a(n)\log^{k-1} n \Big|\leq 3\ve \log^k x
	\end{equation}
	for all $x\ge x_0$. Thus, \eqref{lem_logweight_eqn} holds for any $k\ge2$.
\end{proof}

\begin{theorem}\label{mainthm_logAlladi_k}
	Let $k\ge1$ be an integer.	If $S$ is a set of primes of natural density, then we have
	\begin{equation}\label{logAlladi_min_k}
			\lim_{x\to \infty}\frac{1}{\log^k x}\sum_{\substack{n\leq x\\P^-(n)\in S}} \frac{\mu(n)\log^k n }{n}=0.
	\end{equation}	
Moreover,
	\begin{equation}\label{logAlladi_max_k}
			\lim_{x\to \infty}\frac{1}{\log ^kx}\sum_{\substack{n\leq x\\P^+(n)\in S}} \frac{\mu(n)\log^k n }{n}=0.
	\end{equation}
\end{theorem}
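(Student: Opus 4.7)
The plan is to bootstrap from the $k=1$ case of Theorem~\ref{mainthm_logAlladi_k}---which is precisely Theorem~\ref{mainthm_logAlladi}---up to arbitrary $k\geq 2$ via Lemma~\ref{lem_logweight}. The key observation is that Lemma~\ref{lem_logweight} is an abstract partial-summation statement: as soon as an arithmetic function $a(n)$ satisfies $\sum_{n\leq x} a(n) = o(\log x)$, the weighted sum $\sum_{n\leq x} a(n)\log^{k-1} n$ is automatically $o(\log^k x)$. The idea is to absorb a single factor of $(\log n)/n$ together with the $P^-$-restriction (respectively, the $P^+$-restriction) into $a(n)$ so that the hypothesis of the lemma is exactly what Theorem~\ref{mainthm_logAlladi} already delivers.

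Concretely, to prove \eqref{logAlladi_min_k} for $k\geq 2$, let $f$ denote the characteristic function of $S$ and set
$$a(n) \colonequals \frac{\mu(n)\,f(P^-(n))\,\log n}{n}.$$
Then \eqref{logAlladi_min} of Theorem~\ref{mainthm_logAlladi} asserts precisely that $\sum_{n\leq x} a(n) = o(\log x)$. Applying Lemma~\ref{lem_logweight} to this $a$ then yields
$$\sum_{\substack{n\leq x\\ P^-(n)\in S}} \frac{\mu(n)\log^k n}{n} \;=\; \sum_{n\leq x} a(n)\log^{k-1} n \;=\; o(\log^k x),$$
which, upon division by $\log^k x$, is exactly \eqref{logAlladi_min_k}. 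The base case $k=1$ is just Theorem~\ref{mainthm_logAlladi} itself. The argument for \eqref{logAlladi_max_k} is identical after replacing $P^-$ by $P^+$ throughout and invoking \eqref{logAlladi_max} in place of \eqref{logAlladi_min}.

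There is essentially no obstacle: Lemma~\ref{lem_logweight} together with the $k=1$ case does all of the work. The only mild bookkeeping point is that Lemma~\ref{lem_logweight} is stated only for $k\geq 2$, so $k=1$ must be singled out as a (trivial) base case. In particular, one does \emph{not} need to rerun a Landau-style induction analogous to Propositions~\ref{thm_Mangoldt_k} and \ref{thm_Landau_k} in this restricted setting; the lemma is precisely the abstract mechanism that collapses the higher-$k$ refinement of Theorem~\ref{mainthm_logAlladi} into a one-line deduction from what has already been established.
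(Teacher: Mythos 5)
Your proposal is correct and is essentially identical to the paper's own proof: both take $a(n)=\mu(n)\,1_{P^-(n)\in S}\,\log n/n$, invoke Theorem~\ref{mainthm_logAlladi} to get $\sum_{n\le x}a(n)=o(\log x)$, and then apply Lemma~\ref{lem_logweight} to conclude, with the $P^+$ case handled symmetrically. No further comment is needed.
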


\begin{proof} 
We only show that \eqref{logAlladi_min_k} holds, since the proof of \eqref{logAlladi_max_k} is similar. By Theorem~\ref{mainthm_logAlladi}, \eqref{logAlladi_min_k} holds for $k=1$. For $k\ge2$, we take
$$a(n)=\frac{\mu(n)\log(n) 1_{P^-(n)\in S}}{n},$$
where
\[
1_{P^-(n)\in S}=
\begin{cases}
	1 & \text{if } P^-(n)\in S;\\
	0 & \text{if } P^-(n)\notin S.
\end{cases}
\]
Then by Theorem~\ref{mainthm_logAlladi}, $\sum_{n\leq x}a(n)=o(\log x)$. Notice that
\begin{equation}
	a(n)\log^{k-1}n=\frac{\mu(n)\log^k (n) 1_{P^-(n)\in S} }{n}
\end{equation}
and
\begin{equation}
	\sum_{n\leq x}a(n)\log^{k-1}n = \sum_{\substack{n\leq x\\P^-(n)\in S}} \frac{\mu(n)\log^k n }{n}.
\end{equation}
Thus, using Lemma~\ref{lem_logweight}, we get \eqref{logAlladi_min_k}, as desired.
\end{proof}

\subsection{Higher power logarithmic duality}

In this last section, we generalize the logarithmic duality Theorem~\ref{mainthm_log} to the higher power case. We will use the idea in the proof of Proposition~\ref{thm_Mangoldt_k}. We first generalize Alladi's duality identitis \eqref{duality1} and \eqref{duality2} to a form involving the divisor functions.

\begin{lemma}
Let $f: \N \to\C$ be a function with $f(1)=0$. Then	for any $k\geq1$, we have
	\begin{align}
		[\mu f(P^-(\cdot))]\ast d_k &= - f(P^+(\cdot)) \ast d_{k-1}, \label{duality1_k} \\
		[\mu f(P^+(\cdot))]\ast d_k &= - f(P^-(\cdot)) \ast d_{k-1}.\label{duality2_k}
	\end{align}
	That is,
		\begin{align}
		\sum_{d\mid n} \mu(d)f(P^-(d)) d_k(\frac{n}{d}) & = -\sum_{d\mid n} f(P^+(d)) d_{k-1}(\frac{n}{d}), \\
		\sum_{d\mid n} \mu(d)f(P^+(d)) d_k(\frac{n}{d}) & = -\sum_{d\mid n} f(P^-(d)) d_{k-1}(\frac{n}{d}) 
		\end{align}
		for all $n\ge1$.
\end{lemma}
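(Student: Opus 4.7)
The plan is to view Alladi's identities \eqref{duality1} and \eqref{duality2} as identities of arithmetic functions under Dirichlet convolution, and then convolve both sides with $d_{k-1}$. The key algebraic fact is the recursion $d_k = d_1 \ast d_{k-1}$ for every $k \ge 1$, which is immediate from $d_k = d_1^{\ast k}$, with the convention that $d_0$ is the Dirichlet identity (supported only at $n = 1$).

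First, since $d_1$ is the constant function $n \mapsto 1$ and both sides of \eqref{duality1} and \eqref{duality2} vanish at $n = 1$ (because $f(1) = 0$ and $P^-(1) = P^+(1) = 1$), one can recast the two Alladi identities as
\[
[\mu \cdot f(P^-(\cdot))] \ast d_1 = -f(P^+(\cdot)), \qquad [\mu \cdot f(P^+(\cdot))] \ast d_1 = -f(P^-(\cdot)),
\]
as identities of arithmetic functions on all of $\N$.

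Next, I would convolve both sides of each of these identities on the right by $d_{k-1}$. Using associativity of Dirichlet convolution together with $d_1 \ast d_{k-1} = d_k$, the left-hand sides become $[\mu \cdot f(P^-(\cdot))] \ast d_k$ and $[\mu \cdot f(P^+(\cdot))] \ast d_k$, while the right-hand sides become $-f(P^+(\cdot)) \ast d_{k-1}$ and $-f(P^-(\cdot)) \ast d_{k-1}$, respectively. This yields \eqref{duality1_k} and \eqref{duality2_k}, and expanding each convolution at a general $n \in \N$ produces the pointwise form displayed in the lemma.

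There is essentially no obstacle in this argument: the entire proof collapses to a one-line algebraic manipulation once Alladi's original identities are regarded as elements of the ring of arithmetic functions under Dirichlet convolution. Alternatively, one could carry out a direct induction on $k$, with base case $k = 1$ given by \eqref{duality1} and \eqref{duality2} and inductive step obtained by convolving the hypothesis for $k-1$ with $d_1$, but this is essentially the same argument repackaged.
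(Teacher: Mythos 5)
Your proposal is correct and is essentially identical to the paper's proof: both rewrite Alladi's identity as $[\mu f(P^-(\cdot))]\ast d_1 = -f(P^+(\cdot))$ and then convolve with $d_{k-1}$, using $d_k = d_1 \ast d_{k-1}$ and associativity of Dirichlet convolution.
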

\begin{proof}
We only show that \eqref{duality1_k} holds, since the proof of \eqref{duality2_k} is similar.
For $k=1$, \eqref{duality1_k} follows by \eqref{duality1}. That is, 
\begin{equation}
	\mu f(P^-(\cdot))]\ast d_1 = - f(P^+(\cdot)) \ast d_{0} =  - f(P^+(\cdot)).
\end{equation}

Let $k\ge2$. Notice that $d_k= d_1\ast d_{k-1}$ for all $k\ge2$, then 
$$[\mu f(P^-(\cdot))]\ast d_k = [\mu f(P^-(\cdot))]\ast (d_1\ast d_{k-1}) =  ([\mu f(P^-(\cdot))]\ast d_1) \ast d_{k-1} = - f(P^+(\cdot)) \ast d_{k-1},$$
which gives \eqref{duality1_k}.
\end{proof}

Let $k\ge1$.  For any function $a:B\to\C$ on  a finite nonempty set $B$, we define the $(k-1)$th \textit{logarithmic average} of $a$ over $B$ by
$$
\BEulk{n\in B} a(n) \,\colonequals\, \frac{\sum_{n\in B} {a(n)\log^{k-1}n}/{n}}{\sum_{n\in B}{\log^{k-1} n}/{n}}.
$$

Now, we show the higher power logarithmic analogue of  Alladi's duality theorem~\ref{mainthmmuf} in the following theorem.  Similar to the proof of Theorem~\ref{mainthm_logAlladi}, one can also derive Theorem~\ref{mainthm_logAlladi_k} from Theorem~\ref{mainthm_log_k}.

\begin{theorem}\label{mainthm_log_k}
Let $k\ge1$. Let $f:\N\to \C$ be a bounded function with $f(1)=0$. Then
\begin{equation}\label{diff_k}
	\lim_{x\to \infty} \Big(\BEulk{1\leq n\leq x} f(P^+(n))- \BEu{1\leq n\leq x} f(P^+(n))\Big)=0
\end{equation}
if and only if
\begin{equation}\label{log_alldi_k}
	\lim_{x\to \infty}\frac{1}{\log^k x}\sum_{n\leq x} \frac{\mu(n)\log^k n }{n}f(P^-(n))=0.
\end{equation}
Moreover,
\begin{equation}
	\lim_{x\to \infty} \Big(\BEulk{1\leq n\leq x} f(P^-(n))- \BEu{1\leq n\leq x} f(P^-(n))\Big)=0
\end{equation}
if and only if
\begin{equation}
	\lim_{x\to \infty}\frac{1}{\log^k x}\sum_{n\leq x} \frac{\mu(n)\log^k n }{n}f(P^+(n))=0.
\end{equation}
\end{theorem}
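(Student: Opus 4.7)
The plan is to mirror the two-way computation from the proof of Theorem~\ref{mainthm_log}, replacing the harmonic sum $h(x) = \log x + \gamma + O(1/x)$ with the generalized divisor sum $H_k(y) := \sum_{m \leq y} d_k(m)/m$, which by \eqref{divisor_fcn_logaverage} equals $Q_k(\log y) + O(y^{-1/k}\log^{k-2} y)$. I would consider the double sum
\[
S_k := \sum_{n \leq x} \frac{\mu(n) f(P^-(n))}{n} H_k(x/n).
\]
On one hand, swapping the order of summation via $c = mn$ and invoking the generalized duality \eqref{duality1_k} yields $S_k = -\sum_{d \leq x} \frac{f(P^+(d))}{d} H_{k-1}(x/d)$, with the convention $H_0 \equiv 1$. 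On the other hand, substituting the asymptotic for $H_k$ and expanding $Q_k(\log(x/n))$ via the binomial identity $(\log x - \log n)^i = \sum_{j=0}^i \binom{i}{j}(-1)^j \log^{i-j}(x)\log^j(n)$ expresses $S_k$ as a linear combination of the sums $U_j := \sum_{n \leq x} \mu(n) f(P^-(n)) \log^j(n)/n$ with coefficients polynomial in $\log x$, whose leading contribution is $\frac{(-1)^k}{k!} U_k$. A parallel expansion of the right-hand side expresses it as a linear combination of $V_j := \sum_{d \leq x} f(P^+(d))\log^j(d)/d = O(\log^{j+1} x)$ (by partial summation since $f$ is bounded) with coefficients polynomial in $\log x$.

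The goal is to establish by induction on $k$ the unconditional asymptotic identity
\[
\frac{U_k}{\log^k x} = \BEulk{1 \leq n \leq x} f(P^+(n)) - \BEu{1 \leq n \leq x} f(P^+(n)) + o(1),
\]
from which the equivalence of \eqref{log_alldi_k} and \eqref{diff_k} follows immediately. The base case $k = 1$ is precisely \eqref{log_alldi_equiv_form}. In the inductive step, one equates the two expressions for $S_k$, divides by $\log^k x$, and handles the resulting terms as follows: (i)~the cross contributions $\log^{k-i}(x) U_i/\log^k x$ for $1 \leq i \leq k-1$ are rewritten via the inductive hypothesis as lower-order differences of averages of $f(P^+)$; (ii)~the $U_0$ contribution is converted using Alladi's identity \eqref{eqn_alladi3.8}; and (iii)~the leading $V_{k-1}/\log^k x$ term is identified, via the normalization $\sum_{n \leq x} \log^{k-1} n / n \sim \log^k x / k$, with a scalar multiple of $\BEulk{1 \leq n \leq x} f(P^+(n))$. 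The subleading $V_j$ contributions (for $j < k - 1$) are expected to cancel pairwise against pieces extracted from the inductive substitutions in (i). A sample calculation at $k = 2$, using $Q_2(y) = y^2/2 + \alpha y + \beta$, yields $U_2/\log^2 x = 2 V_1 / \log^2 x - (1/x)\sum_{n \leq x} f(P^+(n)) + o(1)$, which is exactly $\BEulk{1 \leq n \leq x} f(P^+(n)) - \BEu{1 \leq n \leq x} f(P^+(n)) + o(1)$, confirming the target identity. The parallel statement with $P^-$ and $P^+$ exchanged follows by the same argument using \eqref{duality2_k} and \eqref{eqn_alladi3.8_max}.

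The main obstacle will be the combinatorial bookkeeping: both binomial expansions, applied to $Q_k(\log(x/n))$ and $Q_{k-1}(\log(x/d))$ (including the subleading monomials), produce $\Theta(k^2)$ terms indexed by pairs of log-powers, and after inductive substitution one must verify that every $V_j$ contribution with $j < k-1$ is precisely cancelled by pieces arising from the $U_i/\log^i x$ cross terms; this requires tracking coefficients with $o(1)$ precision across all orders. A subsidiary but routine task is verifying that the error $O((n/x)^{1/k}\log^{k-2}(x/n))$ from \eqref{divisor_fcn_logaverage}, when summed against $\mu(n)f(P^-(n))/n$ and multiplied by the various polynomial factors in $\log x$, remains $o(\log^k x)$ throughout; this reduces to the elementary estimate $\sum_{n \leq x} n^{1/k - 1}\log^{k-2}(x/n) = O(x^{1/k}\log^{k-2} x)$.
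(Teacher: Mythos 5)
Your proposal follows essentially the same route as the paper's proof: the same double sum $\sum_{n\le x}\frac{\mu(n)f(P^-(n))}{n}\sum_{m\le x/n}\frac{d_k(m)}{m}$ evaluated two ways via \eqref{duality1_k} and \eqref{divisor_fcn_logaverage}, the same induction on $k$ anchored at \eqref{log_alldi_equiv_form}, and the same unconditional target identity (your displayed goal is exactly \eqref{general_kth_duality} divided by $\log^k x$), with your $k=2$ check matching the paper's formula. The combinatorial cancellation you flag as the main obstacle is dispatched in the paper in closed form by the identity $\sum_{i=1}^{k-1}\binom{k}{i}(-1)^i i\,a^{k-i}t^{i-1}=-k(a-t)^{k-1}-(-1)^k k\,t^{k-1}$ (obtained by differentiating the binomial expansion of $(a-t)^k$ in $t$), which collapses all subleading contributions into the single term $-k\sum_{n\le x}\frac{f(P^+(n))}{n}\log^{k-1}(\frac{x}{n})$ that cancels against the duality side, so no term-by-term bookkeeping is needed.
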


\begin{proof}

We only prove the first equivalence, since the following argument works as well if one switches $P^-(n)$ and $P^+(n)$. To prove that \eqref{diff_k} is equivalent to \eqref{log_alldi_k}, it suffices to show that
\begin{equation}\label{general_kth_duality}
	\sum_{n\leq x} \frac{\mu(n)f(P^-(n)) \log^k n }{n}= k\sum_{n\leq x} \frac{f(P^+(n)) \log^{k-1} n }{n} - \frac{\log^k x}{x} \sum_{n\leq x} f(P^+(n)) +o(\log^k x)
\end{equation}
holds for all $k\ge1$. We do induction on $k$. For $k=1$, \eqref{general_kth_duality} follows by \eqref{log_alldi_equiv_form}. Let $k\ge2$, and suppose \eqref{general_kth_duality} holds for the cases $1,\dots, k-1$. Then by \eqref{general_kth_duality} and \eqref{Alladi_cor}, we have
\begin{equation}
	\sum_{n\leq x} \frac{\mu(n)f(P^-(n)) \log^i n }{n} =O(\log^i x)
\end{equation}
for all $i=0, 1, \dots, k-1$. It follows by using the binomial theorem that
\begin{equation}\label{log_alldi_k_pf1}
		\sum_{n\leq x} \frac{\mu(n)f(P^-(n))}{n} \log^i (\frac{x}{n})=O(\log^i x)
\end{equation}
for all $i=0, 1, \dots, k-1$. Similar to \eqref{thm_Mangoldt_k_pfeqn1} in the proof of Propostion~\ref{thm_Mangoldt_k}, we observe that
	\begin{align}
		\sum_{n\leq x} \frac{\mu(n)f(P^-(n))}{n} \sum_{m\leq \frac{x}{n}} \frac{d_k(m)}{m} & =\sum_{mn\leq x} \frac{\mu(n)f(P^-(n))d_k(m)}{mn} \nonumber\\
		 & = \sum_{c\leq x} \frac1c \sum_{n\mid c}\mu(n)f(P^-(n))d_k(\frac{c}{n}) \nonumber\\
		 & = - \sum_{c\leq x} \frac1c  \sum_{n\mid c} f(P^+(n))d_{k-1}(\frac{c}{n})  \nonumber\\
		 & = - \sum_{n\leq x} \frac{f(P^+(n))}{n} \sum_{m\leq \frac{x}{n}} \frac{d_{k-1}(m)}{m} \nonumber\\
		 &=- \frac1{(k-1)!}\sum_{n\leq x} \frac{f(P^+(n))}{n} \log^{k-1}(\frac{x}{n}) + O(\log^{k-1} x). \label{log_alldi_k_total}
	\end{align}

Then similar to \eqref{thm_Mangoldt_k_pfeqn2} in the proof of Propostion~\ref{thm_Mangoldt_k}, using \eqref{divisor_fcn_logaverage} and \eqref{log_alldi_k_pf1} we get that
\begin{equation}\label{log_alldi_k_total_1st_term}
	\sum_{n\leq x} \frac{\mu(n)f(P^-(n))}{n} \sum_{m\leq \frac{x}{n}} \frac{d_k(m)}{m} =\frac1{k!}\sum_{n\leq x} \frac{\mu(n)f(P^-(n))}{n} \log^k (\frac{x}{n}) + O(\log^{k-1} x).
\end{equation}

Now, expanding out the first term of \eqref{log_alldi_k_total_1st_term}, using the assumption \eqref{general_kth_duality} for the cases $1,\dots, k-1$,   we get that
\begin{align}
	&\quad \sum_{n\leq x} \frac{\mu(n)f(P^-(n))}{n} \log^k (\frac{x}{n}) \nonumber\\
	& = 	\sum_{n\leq x} \frac{\mu(n)f(P^-(n))}{n} (\log x -\log n)^k  = \sum_{i=0}^k \binom{k}{i} (-1)^i \log^{k-i}x \sum_{n\leq x} \frac{\mu(n)f(P^-(n)) \log^i n }{n}  \nonumber\\
	 & = \log^k x \sum_{n\leq x} \frac{\mu(n)f(P^-(n))}{n} + (-1)^k \sum_{n\leq x} \frac{\mu(n)f(P^-(n)) \log^k n }{n} \nonumber\\
	 & \quad + \sum_{i=1}^{k-1} \binom{k}{i} (-1)^i \log^{k-i}x \Big(i\sum_{n\leq x} \frac{f(P^+(n)) \log^{i-1} n }{n} - \frac{\log^i x}{x} \sum_{n\leq x} f(P^+(n)) +o(\log^i x)\Big)\nonumber\\
	 & = - \frac{\log^k x}{x} \sum_{n\leq x} f(P^+(n)) + (-1)^k \sum_{n\leq x} \frac{\mu(n)f(P^-(n)) \log^k n }{n}  \nonumber\\
	 &\quad + \sum_{n\leq x} \frac{f(P^+(n))}{n} \sum_{i=1}^{k-1} \binom{k}{i} (-1)^i {i} \log^{k-i}x  \log^{i-1} n  \nonumber\\
	 &\quad - \sum_{i=1}^{k-1} \binom{k}{i} (-1)^i  \frac{\log^k x}{x} \sum_{n\leq x} f(P^+(n) + o(\log^k x) \nonumber\\
	 &= (-1)^k \frac{\log^k x}{x} \sum_{n\leq x} f(P^+(n))  + (-1)^k \sum_{n\leq x} \frac{\mu(n)f(P^-(n)) \log^k n }{n} \nonumber\\
	 & \quad -k\sum_{n\leq x} \frac{f(P^+(n))}{n} \log^{k-1}(\frac{x}{n}) - (-1)^kk \sum_{n\leq x} \frac{f(P^+(n))\log^{k-1} n}{n}  + o(\log^k x). \label{log_alldi_k_LHS_0}
\end{align}
In the last line of \eqref{log_alldi_k_LHS_0}, we have used the following two identities
\begin{align}
\sum_{i=1}^{k-1} \binom{k}{i} (-1)^i & =-1-(-1)^k, \\
\sum_{i=1}^{k-1} \binom{k}{i} (-1)^i {i} a^{k-i}t^{i-1}&= -k(a-t)^{k-1} -(-1)^k k t^{k-1}, \label{binomial_thm_derivative}
\end{align}
where $a=\log x, t=\log n$. And \eqref{binomial_thm_derivative} can be obtained by taking derivative on $t$ in the binomial expansion of $(a-t)^k$. 

Dividing both sides of \eqref{log_alldi_k_LHS_0} by $k!$, we get that
\begin{multline}
	\frac1{k!}\sum_{n\leq x} \frac{\mu(n)f(P^-(n))}{n} \log^k (\frac{x}{n}) = \frac{(-1)^k}{k!} \Big( \sum_{n\leq x} \frac{\mu(n)f(P^-(n)) \log^k n }{n} + \frac{\log^k x}{x} \sum_{n\leq x} f(P^+(n)) \\
	-k \sum_{n\leq x} \frac{f(P^+(n))\log^{k-1} n}{n}\Big)-\frac1{(k-1)!}\sum_{n\leq x} \frac{f(P^+(n))}{n} \log^{k-1}(\frac{x}{n}) + o(\log^k x). \label{log_alldi_k_LHS}
\end{multline}

Combining \eqref{log_alldi_k_total} and \eqref{log_alldi_k_LHS} together, we obtain  \eqref{general_kth_duality} for the case $k$. Thus,  \eqref{general_kth_duality} holds for all $k\ge1$ by induction.
\end{proof}

\section*{Acknowledgments}

The author is deeply grateful to the referee for a very careful review and very insightful suggestions, which improve this paper a lot; in particular, for the suggestion of incorporating Theorems \ref{mainthm_logAlladi_k} and  \ref{mainthm_log_k}  to the original version of the paper.  The author would also like to thank Shaoyun Yi for the discussions on the earlier draft of this paper.

\end{document}